\documentclass{svproc}
\usepackage{graphicx}%
\usepackage{multirow}%
\usepackage{amsmath,amssymb,amsfonts}%
\usepackage{mathrsfs}%
\usepackage[title]{appendix}%
\usepackage{xcolor}%
\usepackage{textcomp}%
\usepackage{manyfoot}%
\usepackage{booktabs}%
\usepackage{algorithm}%
\usepackage{algorithmicx}%
\usepackage{algpseudocode}%
\usepackage{listings}%
\usepackage{url}
\usepackage{bbm}
\usepackage[capitalise]{cleveref}

\DeclareMathOperator{\E}{\mathbb{E}}

\newcommand{\1}{\mathbbm{1}}
\newcommand{\K}{\Omega}
\newcommand{\B}{\mathcal{B}}

\renewcommand{\phi}{\varphi}
\newcommand{\T}{\mathrm{T}}
\newcommand{\rr}{\mathbb{R}}

\newcommand{\set}[1]{\{ #1\}}
\newcommand{\ol}[1]{\overline{#1}}
\newcommand{\dd}{\,\mathrm{d}}
\newcommand{\abs}[1]{| #1 |}
\begin{document}
\mainmatter              
\title{Explicit Bounds on the Entropy of Piecewise H\"{o}lder Graphon Models}
\titlerunning{Explicit Bounds on the Entropy of Graphon Models}  
%
\author{Connor Loehde-Woolard \and Fran\c{c}ois G. Meyer}
\authorrunning{Connor Loehde-Woolard \and Fran\c{c}ois G. Meyer} 

\institute{University of Colorado Boulder\\
\email{connor.loehde-woolard@colorado.edu}}

\maketitle              

\begin{abstract}
We study the entropy of random graphs generated by piecewise H\"{o}lder continuous graphons. We first present a result on the rate of convergence of the normalized entropy as the size of the graph grows. The core ideas of the proof are described, with the detailed proof provided in the appendix. From this result, we then derive quantitative bounds on the entropy for the stochastic block model and random geometric graph model. These bounds provide explicit formulae rather than asymptotic statements which have been found previously.

\end{abstract}

\section{Introduction}\label{sec1}

The problem of compressing graph-valued data has attracted significant interest for  addressing practical limitations to the storage and use of such data \cite{gao2025,xu2025}. As a result, it has become valuable to be able to quantify the information content, or entropy, of different random graph models as the entropy gives limits on the lossless compression of data from a given source \cite{coverthomas}. Characterizations have been found for the entropy of commonly-used models including Erd\H{o}s-R\'{e}nyi \cite{wafula2023}, the stochastic block model (SBM) \cite{abbe2016}, and the hard random geometric graph model (RGG) \cite{vippathalla2026}. In the case of the latter two, these results are limited to describing the asymptotic behavior of the entropy as the number of nodes increases, and do not provide explicit quantitative formulae. In this paper, we address this problem by extending recent work by Baker et al. \cite{bakerpaper} which studies the limits of compression for the soft random geometric graph model (SRGG). The authors obtain bounds on the convergence of the SRGG entropy in the limit of large graph size. This is accomplished by viewing the connection function in the SRGG as a graphon. A graphon is a function of two variables which arises as the limiting mathematical object of certain sequences of graphs, and any graphon $W$ gives rise to a random graph model. In their paper, the authors derive results for the case of a graphon which is globally H\"{o}lder continuous on its domain. The SBM and RGG can also be viewed as models generated by certain graphons, but these graphons are only piecewise H\"{o}lder continuous. We modify the arguments in \cite{bakerpaper} to obtain results for such models. In particular, we prove that the normalized entropy of a $W$-random graph, with a graphon that is piecewise H\"{o}lder continuous on an appropriate domain, converges with a rate that is $O(\frac{\log n}{n})$ in the limit of large graph size. From this result, we also obtain explicit bounds on the entropy of the SBM and RGG. 

The remainder of the paper is as follows. In \cref{sec2} we provide the necessary background. In \cref{sec3} we present the main result on the entropy of graphs generated from a piecewise H\"{o}lder graphon and some corollaries. Concluding remarks are given in \cref{sec4}. The detailed proof of the main result is given in Appendix \ref{app1}.

\section{Background}\label{sec2}

In this paper, by the term graph we mean a simple, undirected, unweighted graph $G = (V,E)$ with vertex set $V = \{1,\dots,n\}$ and edge set $E$ consisting of unordered pairs of elements in $V$. 

\subsection{Random Graphs and Graphons}
Let $\mathcal{G}_n$ be the set of graphs on $n$ vertices. A particular random graph model corresponds to a probability distribution $f$ on $\mathcal{G}_n$. Many are also generated via some random process, such as the SBM. Given a number of nodes $n$, a vector $v\in\rr^k$ satisfying $v_i\geq 0$ for all $i$ and $\sum_{i=1}^k v_i = 1$, and a symmetric matrix $\Phi\in [0,1]^{k\times k}$, the \emph{stochastic block model} $\mathrm{SBM}(n,v,\Phi)$ is defined by sampling $X_1,\dots,X_n$ from $\set{1,\dots,k}$ independently with probabilities given by $v$, and connecting nodes $i$ and $j$ with probability $\Phi_{X_i,X_j}$ independently of all other edges. A similar process defines the random geometric graph model for a given number of nodes $n$, a subset $\mathcal{X}\subset\mathbb{R}^d$ of Euclidean space with finite diameter $D = \sup_{x,y\in \mathcal{X}}\|x-y\|_2$, and a function $p:[0,D]\to[0,1]$. Here, points $X_1,\dots, X_n$ are sampled uniformly from $\mathcal{X}$ and nodes $i$ and $j$ independently share an edge with probability $p(\|X_i - X_j\|)$. The \emph{hard random geometric graph model} $\mathrm{RGG}(n,\mathcal{X},\tau)$ is obtained when $p$ is of the form
\[p(x) = \begin{cases}1 & \text{if}~x\leq\tau,\\ 0 & \text{if}~x>\tau,\end{cases}\]
for a given threshold $\tau$. The SRGG extends this by allowing for more general connection functions. In the case that the latent space is $\mathcal{X}=[0,1]^d$, we will denote the hard RGG by $\mathrm{RGG}(n,d,\tau)$.

The related underlying processes in the SBM and geometric graph models can be united through the notion of a graphon. Given a probability space $(\Omega, \mathcal{F},\mu)$, a \emph{graphon} is a symmetric measurable function $W:\Omega^2\to[0,1]$. These functions first arose as a tool to study the behavior of sequences of dense graphs in the limit as the number of nodes increases \cite{lovasz2012}. Given a graphon $W$ on a space $\Omega$, the $W$-random graph model on a number of nodes $n$ is defined by sampling points $X_1,\dots,X_n$ uniformly from $\Omega$ and including the edge between nodes $i$ and $j$ independently with probability $W(X_i,X_j)$. It is clear that this generalizes the geometric graph models, where the graphon is given by $W(x,y) = p(\|x-y\|)$. Similarly, we can express the model $\mathrm{SBM}(n,v,\Phi)$ as a graphon model by partitioning the unit interval as
\[[0,1] =[a_1,b_1)\cup [a_2,b_2)\cup\cdots\cup [a_k,b_k]\]
where $a_1=0$ and $b_k=1$, and $b_i-a_i=v_i$ for $i=1,\dots,k$; we write $I_i$ to denote $[a_i,b_i)$. We then define a graphon $\phi:[0,1]^2\to[0,1]$ by
\begin{equation}\label{sbm_graphon}\phi = \sum_{i,j}\Phi_{ij}\1_{I_i\times I_j}.\end{equation}

Common to the SBM and the RGG is that the graphons defining them are not necessarily continuous over the domain. In order to derive the desired quantitative results, it is necessary for us to define the extent of smoothness we require for the graphons. A graphon $W:\K^2\to[0,1]$ is \emph{piecewise H\"{o}lder continuous} if $W$ is of the form
\begin{equation}W(x,y) = \sum_{t=1}^T f_t(x,y) \1_{A_t}(x,y)\end{equation}
where for each $t$, $f_t$ is H\"{o}lder continuous on $A_t$ with constant $L_t$ and exponent $\alpha_t$, and
\begin{equation}\bigcup_{t=1}^T A_t =\K^2\end{equation} 
for sets $A_t$ which are mutually disjoint. We will also refer to such a graphon as simply \emph{piecewise H\"{o}lder}. We observe that the graphons for the two models mentioned above are piecewise H\"{o}lder, and further for the SBM and hard geometric model $\mathrm{RGG}(n,1,\tau)$ the sets $A_t$ partitioning the domain fall into a special class, which we next define. 

By a \emph{relative polytope} we mean a bounded set in $\mathbb{R}^d$ with non-empty interior, which is the non-empty intersection of a finite number of half-spaces. These half spaces may be open or closed. If $P$ is a relative polytope, the closure $\ol{P}$ is a convex polytope in the traditional sense.

\subsection{Entropy and Graphs}\label{entropyback}
For a discrete random variable $Y$ supported on a set $\mathcal{Y}$ with distribution function $P_Y$, the \emph{entropy} of $Y$ is defined to be
\[H(Y) = -\sum_{y\in\mathcal{Y}}P_Y(y)\log\big(P_Y(y)\big),\]
with the convention that $0\cdot\log(0)=0$. We write $\log$ to denote the base-2 logarithm. For the special case of a Bernoulli random variable $B\sim\mathrm{Bernoulli}(p)$ the entropy is given by $h(p)$, where $h$ is the \emph{binary entropy function} defined by $h(x) = -x\log(x)-(1-x)\log(1-x)$. The entropy of a given source is of fundamental importance in information theory, and particularly in the study of compression where the entropy gives a hard lower bound on the minimum expected length of a prefix-free code \cite[Th. 5.4.1]{coverthomas}.

 The entropy of a random graph $G$ on $n$ nodes with distribution $P_G$ is similarly defined to be
\[H(G) = -\sum_{g\in\mathcal{G}_n}P_G(g)\log\big(P_G(g)\big).\]
Entropy has been studied for many standard random graph models. For the model $\mathrm{SBM}(n,v,\Phi)$, the entropy has been characterized in \cite{abbe2016} up to an asymptotic term.

\begin{proposition}[Abbe, \cite{abbe2016}]\label{abbe}
Let $G\sim\mathrm{SBM}(n,v,\Phi)$. Then
\[H(G) = \binom{n}{2}v^\T H(\Phi) v+O(n),\]
where $[H(\Phi)]_{ij} = h(\Phi_{ij})$.
\end{proposition}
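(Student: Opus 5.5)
The natural route is to sandwich $H(G)$ between the conditional entropy given the latent labels and that same quantity plus the entropy of the labels. Let $X=(X_1,\dots,X_n)$ be the vector of i.i.d.\ block labels, with $X_i\in\set{1,\dots,k}$ distributed according to $v$.

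\textbf{Lower bound.} Since conditioning reduces entropy, $H(G)\ge H(G\mid X)$.

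\textbf{Upper bound.} By the chain rule, $H(G)\le H(G,X)=H(X)+H(G\mid X)$. Because the labels are i.i.d.,
\[H(X)=nH(v)\le n\log k=O(n).\]

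It therefore suffices to show that
\[H(G\mid X)=\binom{n}{2}v^\T H(\Phi)v+O(n).\]

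\textbf{Computing $H(G\mid X)$.} Conditional on $X=x$, the $\binom n2$ edge indicators are independent Bernoulli variables with parameters $\Phi_{x_i,x_j}$. Hence
\[H(G\mid X=x)=\sum_{i<j}h(\Phi_{x_i,x_j}).\]
Taking expectation over $X$, each pair $i<j$ has independent labels with joint law $v_av_b$. So
\[\E\, h(\Phi_{X_i,X_j})=\sum_{a,b}v_av_b\,h(\Phi_{ab})=v^\T H(\Phi)v,\]
and summing over pairs gives $H(G\mid X)=\binom n2 v^\T H(\Phi)v$ exactly.

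\textbf{Conclusion.} Combining the two bounds,
\[\binom n2 v^\T H(\Phi)v\le H(G)\le \binom n2 v^\T H(\Phi)v+n\log k,\]
which gives the statement with an explicit constant, taking $k$ and $v$ as fixed.

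The only point needing care is the expectation step: it requires that $X_i$ and $X_j$ are independent for $i\ne j$, so the diagonal case $i=j$ never arises. That holds by construction. There is no real obstacle here; the one subtlety is that the $O(n)$ term is really $nH(v)$, so it is uniform only when $k$ is fixed.
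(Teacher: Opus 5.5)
Your proof is correct and complete. The paper gives no proof of this statement; it is quoted from Abbe. The natural comparison is therefore with the appendix proof of \cref{prop1}, which uses the same sandwich: a lower bound $H(G)\ge H(G\mid X)$ (via \cref{jan_prop}) and an upper bound $H(G)\le H(G\mid M)+H(M)$ for a discretization $M$ of the latent positions.

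The difference is the choice of discretization. The paper conditions on a uniform grid with $m=Cn^\gamma$ cells per axis. It therefore pays $dn\log m$ for the labels, in general a H\"older error on product cells lying inside a single piece, and a penalty $\theta$ on each of the $O(m^{2d-1})$ product cells straddling a boundary between pieces. Because $m$ must grow with $n$ for the boundary penalty to vanish, the label cost is of order $n\log n$. This is where the $n\log n$ terms in the paper's explicit SBM bound come from.

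You instead condition on the block labels themselves, i.e.\ on the partition $\{I_a\}$ on which $\phi$ is exactly constant. Then $\ol{W}=\phi$, there is no approximation error at all, and the label cost is only $nH(v)\le n\log k$. What your route buys is the explicit, non-asymptotic two-sided bound
\[
\binom{n}{2}v^\T H(\Phi)v\;\le\; H(G)\;\le\;\binom{n}{2}v^\T H(\Phi)v+nH(v),
\]
which is strictly sharper than the SBM bound the paper extracts from \cref{prop1}. What the paper's grid argument buys is generality: for graphons such as the RGG's, there is no finite partition on which $W$ is constant.

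One small correction to your closing remark. The exact gap is $H(G)-H(G\mid X)=nH(v)-H(X\mid G)$. So the $O(n)$ term is at most $nH(v)$, not equal to it.
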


In a similar manner, recent work in \cite{vippathalla2026} gives an upper bound on the entropy for the hard RGG with latent space $[0,1]^d$ to leading order. 

\begin{proposition}[Vippathalla et al., \cite{vippathalla2026}]\label{rgg}
Let $G\sim\mathrm{RGG}(n,d,\tau)$. Then
\[H(G)\leq\begin{cases}dn\log n+o(n\log n) & \text{if}~0<\tau\leq\sqrt{d}/2,\\ [1-\beta(\tau)]dn\log n+o(n\log n) & \text{if}~\sqrt{d}/2\leq \tau<\sqrt{d},\end{cases}\]
where $\beta(\tau)$ is the volume of the ball $B\big((1/2,\dots,1/2); r-\sqrt{d}/2\big)\cap[0,1]^d$.
\end{proposition}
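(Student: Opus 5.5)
The approach is to bound $H(G)$ by the entropy of a discretized description of the latent positions plus the entropy of the few edges that this description leaves undetermined. Let $X_1,\dots,X_n$ be the uniform points in $[0,1]^d$. Fix a resolution $\varepsilon=\varepsilon_n$, to be chosen later, and let $Q_i$ be the cell of the grid of side $\varepsilon$ that contains $X_i$. Write $Q=(Q_1,\dots,Q_n)$. Since $H(G)\le H(G,Q)=H(Q)+H(G\mid Q)$, the proof reduces to two estimates.

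\emph{First estimate.} Since each $Q_i$ takes at most $\lceil 1/\varepsilon\rceil^d$ values, $H(Q)\le nd\log(1/\varepsilon)+O(n\varepsilon)$.

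\emph{Second estimate.} Given $Q$, the edge $\{i,j\}$ is determined unless the distance range $\{\|x-y\| : x\in Q_i,\ y\in Q_j\}$ contains $\tau$. That range has width at most $2\sqrt d\,\varepsilon$. Hence a pair is ambiguous with probability $p_\varepsilon\le C_d\,\varepsilon$, where $C_d$ depends only on $d$ and $\tau$: the density of $\|X_i-X_j\|$ is bounded. Let $Z_{ij}$ be the indicator that the pair is ambiguous; it is a function of $Q$. Subadditivity gives
\[
H(G\mid Q)\le\sum_{i<j}H(\1_{\{i,j\}\in E}\mid Q)\le \binom n2 p_\varepsilon ,
\]
because each edge costs at most one bit when ambiguous and zero bits otherwise. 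Choose $\varepsilon=1/(n\log^2 n)$. Then $\binom n2 p_\varepsilon=O(n/\log^2 n)=o(n\log n)$, while $nd\log(1/\varepsilon)=dn\log n+O(n\log\log n)$. This proves the first case, and it holds for every $\tau$.

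\emph{Second case, $\sqrt d/2\le\tau<\sqrt d$.} Let $c=(1/2,\dots,1/2)$ and $B=B(c;\tau-\sqrt d/2)\cap[0,1]^d$. Every point of the cube lies within $\sqrt d/2$ of $c$, so by the triangle inequality a node with $X_i\in B$ is adjacent to \emph{every} other node. We therefore refine the description. For each node record the bit $\1_{X_i\in B}$, at a total cost of at most $n$ bits. Record the grid cell $Q_i$ only for nodes with $X_i\notin B$. Call this description $\tilde Q$. Conditioning on the bits, the number of encoded cells $N=\#\{i: X_i\notin B\}$ has $\E N=(1-\beta(\tau))n$. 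Thus
\[
H(\tilde Q)\le n+\E[N]\,d\log\lceil 1/\varepsilon\rceil=(1-\beta(\tau))\,dn\log n+o(n\log n).
\]
Given $\tilde Q$, every edge touching a node in $B$ is determined. An edge between two nodes outside $B$ is determined unless the pair is ambiguous, exactly as before. So $H(G\mid\tilde Q)\le\binom n2 p_\varepsilon=o(n\log n)$ with the same choice of $\varepsilon$. Combining the two bounds gives the second case. (Here the radius $r$ in the statement is $\tau$.)

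\emph{Main obstacle.} The delicate step is the uniform bound $p_\varepsilon\le C_d\varepsilon$ on the probability that a pair of cells straddles the threshold. I would prove it as follows. Condition on $X_i$. The set of $y$ that can be ambiguous lies in the shell $\{y:\big|\|y-X_i\|-\tau\big|\le 2\sqrt d\,\varepsilon\}$. Its intersection with $[0,1]^d$ has volume at most the surface area of a sphere of radius $\tau$ times $4\sqrt d\,\varepsilon$, plus lower-order terms. This gives a constant depending only on $d$ and $\tau$.

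A secondary point is the boundary case $\tau=\sqrt d/2$. There the ball $B$ has radius zero, so $\beta=0$, and the two cases agree.
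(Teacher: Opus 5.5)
Your proof is correct. It even gives remainder $O(n\log\log n)$, and $O(n)$ if you take $\varepsilon\asymp 1/n$, which already suffices.

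The paper gives no proof of this proposition; it is quoted from Vippathalla et al. The relevant comparison is therefore with the paper's own discretization argument for \cref{prop1} in Appendix~\ref{app1}, and with the bound the paper derives from it for $\mathrm{RGG}(n,1,\tau)$.

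Structurally your argument follows that scheme. You condition on the cell vector, use $H(G)\le H(M)+H(G\mid M)$, and pay $d\log(1/\varepsilon)$ bits per node. For a $0$--$1$ graphon, the paper splits product cells into $C_1$ (inside one piece, where $\ol{W}=W$ and nothing is lost) and $C_2$ (straddling a piece boundary, each charged at most $\theta$). This is exactly your split into determined and ambiguous pairs.

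You differ from the paper in two places.
\begin{itemize}
\item \emph{Counting straddling cells.} The paper uses a surface-area lemma that needs the pieces $A_t$ to be relative polytopes, which for the hard RGG restricts it to $d=1$. Your shell-volume estimate $P\big(\big|\|X_i-X_j\|-\tau\big|\le 2\sqrt d\,\varepsilon\big)\le C_{d,\tau}\varepsilon$ handles the curved discontinuity set $\{\|x-y\|=\tau\}$ in every dimension.
\item \emph{The central ball.} Nodes in $B$ are adjacent to all others, so their cells need not be encoded. This is a genuinely new ingredient, and it is what produces the factor $1-\beta(\tau)$; the generic bound $H(M)\le dn\log m$ cannot see it.
\end{itemize}

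Because only ambiguous pairs cost anything in your argument, you also get the sharp leading coefficient $d$. By contrast, the explicit bound the paper derives from \cref{prop1} for $d=1$ has leading term $(1+\tfrac{\sqrt2}{2})\,n\log n$.

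What the paper's route buys in exchange is generality. It applies to piecewise H\"{o}lder graphons with fractional values, where the cells never determine an edge and one must compare $h(\ol{W})$ with $h(W)$ through H\"{o}lder continuity. Conversely, if you track $C_{d,\tau}$ in your argument, you get explicit non-asymptotic bounds for the hard RGG in every dimension, which the paper obtains only for $d=1$.
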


The following result, proved as Theorem D.5 in \cite{janson2013}, determines the limiting behavior of $W$-random graph entropy as the number of nodes grows. 

\begin{proposition}[Janson, \cite{janson2013}]\label{jan_prop}
Let $W$ be a graphon defined on a set $\Omega$ with probability measure $\mu$. Then 
\begin{equation}\label{lowbnd}
H\big(G(n,W)\big)\geq\binom{n}{2} \iint_{\Omega^2}h\big(W(x,y)\big)\dd\mu(x)\dd\mu(y),
\end{equation}
where $H\big(G(n,W)\big)$ is the entropy of $G(n,W)$. Further,
\begin{equation}
\lim_{n\to\infty}\frac{H\big(G(n,W)\big)}{\binom n2} = \iint_{\Omega^2}h\big(W(x,y)\big)\dd\mu(x)\dd\mu(y).
\end{equation}
\end{proposition}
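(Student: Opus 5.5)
The plan is to sandwich $H(G(n,W))$ between two quantities that both behave like $\binom n2 \iint h(W)$. Write $G=G(n,W)$ and let $X=(X_1,\dots,X_n)$ be the latent points, so that given $X$ the edges are independent Bernoulli$(W(X_i,X_j))$.

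For the lower bound \eqref{lowbnd}, conditioning reduces entropy, so $H(G)\ge H(G\mid X)$. Conditional on $X$, the $\binom n2$ edge indicators are independent, which gives
\[
H(G\mid X)=\E\sum_{i<j}h\big(W(X_i,X_j)\big).
\]
For $i\neq j$ the pair $(X_i,X_j)$ has law $\mu\otimes\mu$, so each summand has expectation $\iint h(W)\dd\mu\dd\mu$. This yields \eqref{lowbnd} directly.

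For the limit we need a matching upper bound $H(G)\le \binom n2\iint h(W)+o(n^2)$, and we first do this for step graphons. Fix a partition of $\Omega$ into $k$ measurable parts $B_1,\dots,B_k$, and let $Z_i\in\{1,\dots,k\}$ record which part $X_i$ lies in. By the chain rule,
\[
H(G)\le H(G,Z)=H(Z)+H(G\mid Z)\le n\log k + H(G\mid Z).
\]
Given $Z$, the edges are independent. The edge $\{i,j\}$ is Bernoulli with parameter $\ol W_{Z_iZ_j}$, the average of $W$ over $B_{Z_i}\times B_{Z_j}$. Hence
\[
H(G\mid Z)=\binom n2\iint h\big(W_{\mathcal P}\big)\dd\mu\dd\mu,
\]
where $W_{\mathcal P}$ is the step function obtained by averaging $W$ on the blocks of the partition $\mathcal P$. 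Since $h$ is concave, Jensen gives $h(W_{\mathcal P})\ge$ the block average of $h(W)$, which points the wrong way for an upper bound. So this step alone does not finish the argument: it shows $\limsup H(G)/\binom n2\le \iint h(W_{\mathcal P})$ for every finite partition $\mathcal P$.

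The main obstacle is to close the gap between $\iint h(W_{\mathcal P})$ and $\iint h(W)$. I would use a martingale argument. Take a refining sequence of finite partitions $\mathcal P_m$ that generate the $\sigma$-algebra, which is possible after reducing to a standard Borel $\Omega$, for instance by pulling $W$ back to $[0,1]$. Then $W_{\mathcal P_m}\to W$ almost everywhere and in $L^1$ by martingale convergence. Because $h$ is continuous and bounded on $[0,1]$, dominated convergence gives $\iint h(W_{\mathcal P_m})\to\iint h(W)$.

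Combining the pieces, for each $m$ we have
\[
\limsup_n \frac{H(G)}{\binom n2}\le \iint h(W_{\mathcal P_m}) + \lim_n \frac{n\log k_m}{\binom n2}=\iint h(W_{\mathcal P_m}),
\]
where $k_m$ is the number of parts in $\mathcal P_m$. Letting $m\to\infty$ and combining with \eqref{lowbnd} yields the limit.

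If generating partitions are unavailable for a general probability space, I would instead approximate $W$ in $L^1$ by a step function $\widetilde W$. I would then bound $H(G)$ by coupling $G(n,W)$ with $G(n,\widetilde W)$ through shared uniforms, so that $H(G)\le H(G,\widetilde G)\le H(\widetilde G)+H(G\mid\widetilde G)$. The last term is controlled by the expected number of disagreeing edges, roughly $\binom n2\,\E\, h(|W-\widetilde W|)$ up to a concavity argument, which is small.
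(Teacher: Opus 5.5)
The paper does not prove this statement itself; it cites Janson, Theorem D.5. Your lower bound via $H(G)\ge H(G\mid X)$ is correct. Your upper-bound strategy is the same discretize-and-condition argument the paper's appendix runs for Proposition~\ref{prop1}.

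\textbf{One false step.} Given the block labels $Z$, the edges are \emph{not} conditionally independent unless $W$ is constant on the blocks. The edges $E_{ij}$ and $E_{ik}$ still share the latent point $X_i$, which remains random inside $B_{Z_i}$. So the equality $H(G\mid Z)=\binom n2\iint h(W_{\mathcal P})\dd\mu\dd\mu$ fails in general.

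For a concrete counterexample, take the trivial partition ($k=1$, so $H(G\mid Z)=H(G)$) and the two-clique graphon $W=\1_{[0,1/2)^2}+\1_{[1/2,1]^2}$ on $[0,1]$. Your formula gives $H(G)=\binom n2 h(1/2)=\binom n2$. But $G$ is a function of which half each $X_i$ lies in, so $H(G)\le n<\binom n2$ for $n\ge 4$.

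What is true, and all you need, is the inequality from subadditivity:
\[
H(G\mid Z)\le\sum_{i<j}H(E_{ij}\mid Z)=\sum_{i<j}H(E_{ij}\mid Z_i,Z_j)=\binom n2\iint h(W_{\mathcal P})\dd\mu\dd\mu .
\]
The middle equality holds because $(E_{ij},Z_i,Z_j)$ is independent of $(Z_l)_{l\ne i,j}$. This is precisely \eqref{upbnd} combined with Lemma~\ref{olw} in the paper. With ``$=$'' replaced by ``$\le$'', your bound $\limsup_n H(G)/\binom n2\le\iint h(W_{\mathcal P})\dd\mu\dd\mu$ for every finite partition stands, and the rest of your argument goes through.

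\textbf{How the gap $\iint h(W_{\mathcal P})-\iint h(W)$ is closed.} This is where your route genuinely differs from the paper's. The paper wants an explicit rate. It bounds $h(\ol W)-h(W)\le h(|\ol W-W|)$ using Baker's Lemma~3, applies H\"older continuity on product cells inside a single $A_t$, and counts the cells that meet polytope boundaries. Your L\'evy upward theorem plus bounded convergence gives no rate. In exchange it needs no regularity of $W$, which is exactly what the qualitative limit for an arbitrary measurable graphon requires.

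Your detour through a standard Borel space is also unnecessary. $W$ is $\mathcal G\otimes\mathcal G$-measurable for some countably generated $\mathcal G\subseteq\mathcal F$. To see this, note that the sets of $\mathcal F\otimes\mathcal F$ lying in $\sigma(\mathcal C)\otimes\sigma(\mathcal C)$ for some countable $\mathcal C\subseteq\mathcal F$ form a $\sigma$-algebra containing all rectangles, and $W$ is determined by the countably many sets $\{W>q\}$, $q\in\mathbb Q$. Refining finite partitions $\mathcal P_m$ that generate $\mathcal G$ then give $W_{\mathcal P_m}=\E[W\mid\sigma(\mathcal P_m\times\mathcal P_m)]\to W$ $\mu\otimes\mu$-a.e.

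So the coupling fallback is not needed. If you do use it, note the direction of the bound. Subadditivity gives $H(G\mid\widetilde G)\le\binom n2 h(\|W-\widetilde W\|_1)$, and by concavity this dominates $\binom n2\iint h(|W-\widetilde W|)$ rather than the reverse. It still tends to $0$ as $\|W-\widetilde W\|_1\to 0$.
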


\section{Convergence of Entropy for Piecewise H\"{o}lder Graphons}\label{sec3}

Consider a set $\K\subset\rr^d$ which is bounded with unit Lebesgue volume and finite surface area. Then $\K$ can be contained in a bounding $d$-cube $\B$ with side length $\ell$. We can define a graphon $W$ on $\K$ and attempt to determine the entropy of a $W$-random graph on $n$ nodes $G(n,W)$. Conditioned on the points $X_1,\dots, X_n$ in $\Omega$, the edge indicator variables associated to $G(n,W)$ are independent and the conditional entropy can be computed directly. Thus a significant challenge in determining the total entropy in this setting is dealing with the unknown node positions. Based on the technique of proof given in \cite{janson2013} for \cref{jan_prop}, Baker et al. \cite{bakerpaper} prove the following result. 
\begin{proposition}[Baker et al., \cite{bakerpaper}]\label{baker_prop}
For $n\geq 2$, let $G$ be an SRGG on $n$ nodes with a H\"{o}lder continuous connection function $p$. Suppose $p$ has H\"{o}lder exponent $\alpha$ and H\"{o}lder constant $L$. Define
\[\Delta_n = \frac{1}{\binom{n}{2}}H(G)-\E[h(p(R))].\]
Then
\[0\leq\Delta_n\leq\frac{dn\log(m)}{\binom{n}{2}}+a_1\frac{\log(m)}{m^\alpha}+a_2\frac{1}{m^\alpha}+a_3\frac{1}{m}+a_4\frac{1}{m^2},\]
where $m = Cn^\gamma$ for some $\gamma>0$ and some constant $C \geq L^{1/\alpha}\ell\sqrt{2d}$. The constants $a_i$, $i=1,2,3,4$ are given by
\begin{align*}
a_1 &= \frac{\alpha L}{\ln 2}(\ell\sqrt{2d})^\alpha, & a_2 &= L(\ell\sqrt{2d})^\alpha \big(1-\log\big(L(\ell\sqrt{2d})^\alpha\big)\big),\\
a_3&= 2|\partial\K|\ell, & a_4 &= |\partial\K|^2\ell^2,
\end{align*}
where $|\partial\K|$ is the $(d-1)$-dimensional surface area of $\K$ . 
\end{proposition}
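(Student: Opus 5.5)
The plan follows Janson's quantization argument behind \cref{jan_prop}. The lower bound $\Delta_n\geq 0$ is immediate from \eqref{lowbnd}: the SRGG is the $W$-random graph with $W(x,y)=p(\|x-y\|)$, and then $\iint h(W)\dd x\dd y=\E[h(p(R))]$ with $R=\|X-Y\|$ for independent uniform $X,Y\in\K$.

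For the upper bound, partition the bounding cube $\B$ into $m^d$ subcubes of side $\ell/m$, and let $Q_i\in\set{1,\dots,m^d}$ be the index of the subcube containing $X_i$. Since $Q=(Q_1,\dots,Q_n)$ is a function of the positions,
\[H(G)\leq H(G,Q)=H(Q)+H(G\mid Q)\leq dn\log m+H(G\mid Q).\]
This produces the first term after dividing by $\binom n2$. Conditioned on $Q$, the edges are independent: given $Q_i=a$ and $Q_j=b$, the edge indicator is Bernoulli with parameter $\bar W_{ab}$, the average of $W$ over $(C_a\cap\K)\times(C_b\cap\K)$. Hence
\[\frac{H(G\mid Q)}{\binom n2}=\sum_{a,b}\mu_a\mu_b\,h(\bar W_{ab}),\qquad \mu_a=|C_a\cap\K|.\]
It then suffices to compare this with $\iint h(W)$, cell by cell.

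Split the pairs of cells into two kinds.

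\textbf{Interior cells.} If both cells lie inside $\K$, then for any $x,x'\in C_a$ and $y,y'\in C_b$ the distances $\|x-y\|$ and $\|x'-y'\|$ differ by at most $2\sqrt d\,\ell/m$. By the H\"older property, $W$ therefore oscillates by at most $\epsilon=L(\ell\sqrt{2d}/m)^\alpha$ on the product cell; I will take the diameter of the product cell in $\rr^{2d}$ to be $\ell\sqrt{2d}/m$ so as to match the stated constants. The condition $C\geq L^{1/\alpha}\ell\sqrt{2d}$ ensures $\epsilon\leq 1$, and small enough that the modulus of continuity of $h$ applies. The standard estimate $|h(s)-h(t)|\leq h(|s-t|)$ gives, for $\epsilon\le 1/2$,
\[h(\epsilon)\leq \epsilon\log(1/\epsilon)+\epsilon\log e.\]
Since $h(\bar W_{ab})$ differs from $h(W(x,y))$ by at most $h(\epsilon)$ pointwise on the cell, integrating yields a contribution of at most $h(\epsilon)$. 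Expanding $\epsilon\log(1/\epsilon)$ with $\epsilon=L(\ell\sqrt{2d})^\alpha m^{-\alpha}$ gives the $a_1\log(m)/m^\alpha$ and $a_2/m^\alpha$ terms. The factor $\alpha/\ln 2$ in $a_1$ comes from $\log(m^\alpha)=\alpha\ln m/\ln 2$; up to bookkeeping of the $\log e$ term, which I expect to be absorbed by using $\epsilon\log(1/\epsilon)+\epsilon$ or a slightly sharper bound, this matches.

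\textbf{Boundary cells.} Pairs where at least one cell meets $\partial\K$ are bounded crudely by $h\leq 1$. The total $\mu$-mass of cells meeting $\partial\K$ is at most about $|\partial\K|\ell/m$, since the number of such cubes is roughly $|\partial\K|(m/\ell)^{d-1}$ and each has volume $(\ell/m)^d$. The product mass of pairs with at least one boundary cell is therefore at most $2|\partial\K|\ell/m+|\partial\K|^2\ell^2/m^2$, which gives the $a_3$ and $a_4$ terms.

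The main obstacle is this boundary count: bounding the number of grid cubes meeting $\partial\K$ by $|\partial\K|(m/\ell)^{d-1}$ times a constant is not true for arbitrary sets of finite surface area. I would justify it via a Minkowski-content or tube-volume estimate, namely that the volume of the $\sqrt d\,\ell/m$-neighborhood of $\partial\K$ is at most $|\partial\K|\cdot\ell/m$ to leading order, taking any regularity of $\K$ assumed in \cite{bakerpaper}. The constants in $a_3$ and $a_4$ would be matched by writing the boundary mass as $\delta=|\partial\K|\ell/m$ and the total bad mass as $1-(1-\delta)^2\leq 2\delta+\delta^2$.
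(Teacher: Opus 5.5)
Your outline is the same argument the paper runs in its appendix, following Baker et al.: quantize, use $H(G)\le H(Q)+H(G\mid Q)$ with $H(Q)\le dn\log m$, compare $h(\ol{W})$ with $h(W)$ via $|h(s)-h(t)|\le h(|s-t|)$, use H\"older continuity on interior product cells and the bound $1$ on boundary ones. The rate survives, but as written the proposal does not prove the inequality with the stated constants.

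A small point first. Edges are not conditionally independent given $Q$, since $E_{ij}$ and $E_{ik}$ share the unrevealed position of $X_i$ inside its cell. So your identity for $H(G\mid Q)$ holds only as an inequality. That is all you need: $H(G\mid Q)\le\sum_{i<j}H(E_{ij}\mid Q_i,Q_j)$ by subadditivity, which is exactly \eqref{upbnd}.

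More substantively, in the interior step you correctly find that $\|x-y\|$ can vary by $2\sqrt d\,\ell/m$ over a product cell, and then simply declare $\epsilon=L(\ell\sqrt{2d}/m)^\alpha$. The product cell does have diameter $\ell\sqrt{2d}/m$. However, $W(x,y)=p(\|x-y\|)$ is only $(2^{\alpha/2}L,\alpha)$-H\"older on $\rr^{2d}$, and the oscillation $L(2\sqrt d\,\ell/m)^\alpha$ is attained for suitable $p$ (two cells far apart along a diagonal). The appendix gets $\sqrt{2d}$ only because there the pieces $f_t$ are assumed $(L,\alpha)$-H\"older on subsets of $\rr^{2d}$. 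For the SRGG you can recover the constant by bounding $|W-\ol{W}|$ instead of the oscillation. For $z=(x,y)$ in the cell and $(X,Y)$ uniform on it, $|W(z)-\ol{W}(z)|\le L\,\E\|(x-y)-(X-Y)\|^\alpha\le L\big(\E\|(x-y)-(X-Y)\|^2\big)^{\alpha/2}$. The second moment is $\sum_i[(u_i-v_i)^2+s^2/6]\le\tfrac{7}{6}ds^2\le 2ds^2$, where $s=\ell/m$ and $u,v$ are the offsets of $x,y$ from the cell centers. Also, $C\ge L^{1/\alpha}\ell\sqrt{2d}$ gives only $\epsilon\le 1$, not $\epsilon\le 1/2$. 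So instead of monotonicity of $h$, use the nondecreasing majorant $h(x)\le -x\log x+x/\ln 2$ from \cref{ent-exp}, as the paper does.

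Your reconciliation of $a_1,a_2$ is wrong. With base-$2$ logarithms throughout, expanding $-\epsilon\log\epsilon+\epsilon/\ln 2$ gives $a_1'=\alpha\kappa$ and $a_2'=\kappa(1/\ln 2-\log\kappa)$, where $\kappa=L(\ell\sqrt{2d})^\alpha$. This is exactly what the appendix obtains, up to its extra factor $\ell^{2d}$, and no $1/\ln 2$ comes out of $\log(m^\alpha)$. Your proposed fix $h(\epsilon)\le\epsilon\log(1/\epsilon)+\epsilon$ is false, since $h(\epsilon)-\epsilon\log(1/\epsilon)=-(1-\epsilon)\log(1-\epsilon)\sim\epsilon/\ln 2$. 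The quoted pair exceeds yours by $\kappa(1/\ln 2-1)(\alpha\log m-1)/m^\alpha$. So it follows from your bound only when $m^\alpha\ge 2$, and you should say that rather than claim a match.

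Finally, the boundary estimate $\delta\le|\partial\K|\ell/m$ is not supplied. Points of $\K$ in cells meeting $\partial\K$ lie in the inner $\sqrt d\,\ell/m$-band of $\partial\K$. That band has volume at most $\sqrt d\,|\partial\K|\ell/m$ for convex $\K$, and in general only asymptotically and under regularity beyond finite surface area. So your tube argument does not give $a_3=2|\partial\K|\ell$ and $a_4=|\partial\K|^2\ell^2$ for all $m$. The paper does not prove this step either; it imports the boundary-cell count from Baker et al. You should cite it likewise or accept a larger constant. Otherwise your $\mu$-mass bookkeeping is cleaner than the paper's cell counting. Once $\delta\le|\partial\K|\ell/m$ is available, $1-(1-\delta)^2\le 2\delta$ shows the $a_4$ term is superfluous.
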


The assumption of H\"{o}lder continuity is important in the proof of \cref{baker_prop} for obtaining a bound with sufficient decay. As noted above, this assumption fails globally for some important and common models, while piecewise H\"{o}lder continuity holds. This raises the question of how discontinuities present in the graphon affect the bound, and if the same rate of decay is achieved.
As we also note above, the graphons for the SBM and $\mathrm{RGG}(n,1,\tau)$ models take the form
\begin{equation}W(x,y) = \sum_{t=1}^T f_t(x,y) \1_{A_t}(x,y),\end{equation}
where for each $t=1,\dots,T$, $A_t = \K^2\cap R_t$ where $R_t$ is a relative polytope and $A_t$ has non-empty interior. We refer to such a graphon as \emph{piecewise H\"{o}lder with polytope domain}. We can assume that $R_t\subset\B$, as if not we can take the intersection $R_t'=R_t\cap\B$ to obtain a new relative polytope which still satisfies $A_t = \K^2\cap R_t'$. The boundary of these sets is sufficiently regular that the discontiuities in the graphon do not significantly alter the bound from \cref{baker_prop}, as we state below.

\begin{proposition}\label{prop1}
Let $W$ be piecewise H\"{o}lder with polytope domain, and write
\[W(x,y) = \sum_{t=1}^T f_t(x,y) \1_{A_t}(x,y),\]
where for each $t$, $f_t$ is H\"{o}lder continuous with constant $L_t$ and exponent $\alpha_t$. Define $L=\max_{t\in\set{1,\dots,T}}L_t$ and $\alpha = \min_{t\in\set{1,\dots,T}}\alpha_t$. Further, let 
\begin{equation}\rho = \max_{t,t'\in\set{1,\dots,T}}\sup_{\substack{(x,y)\in A_t\\ (x',y')\in A_{t'}}}|f_t(x,y)-f_{t'}(x',y')|.\end{equation}
Define
\[\Delta_n = \frac{1}{\binom n2}H(G(n,W)) - \iint_{\Omega^2}h\big(W(x,y)\big)\dd x\dd y.\]
Then
\begin{equation}0\leq\Delta_n\leq\frac{dn\log m}{\binom n2} +a_1 \frac{\log m}{m^\alpha}+a_2 \frac{1}{m^\alpha}+a_3\frac{1}{m}+a_4\frac{1}{m^2}\end{equation}
where $m = Cn^\gamma$ for $\gamma>0$ with $C\geq L^{1/\alpha}\ell\sqrt{2d}$ and the constants $a_i$ are given by 
\begin{align*}
a_1 &= \alpha L (\ell\sqrt{2d})^{\alpha}\ell^{2d}, & a_2 &=L(\ell\sqrt{2d})^{\alpha}(1/\ln 2 - \log(L(\ell\sqrt{2d})^{\alpha}))\ell^{2d},\\
a_3 &= 4d(T-1)\ell^{2d}\theta+2|\partial\K|\ell, & a_4 &= |\partial\K|^2\ell^2,
\end{align*}
where $\theta = -\rho\log\rho+\rho/\ln 2$. In the special case that $\Omega = [0,1]$, we can take $a_3 = 4d(T-1)\ell^{2}\theta$ and $a_4 = 0$.
\end{proposition}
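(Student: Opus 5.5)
The lower bound $\Delta_n\geq 0$ is exactly \eqref{lowbnd} of \cref{jan_prop}, so the work is in the upper bound. For that I will follow the Janson/Baker et al.\ discretization argument behind \cref{baker_prop}. Partition the bounding cube $\B$ into $m^d$ subcubes $Q_1,\dots,Q_{m^d}$ of side $\ell/m$. Let $Z_i$ be the index of the subcube containing $X_i$. Since $G$ is a function of $(G,Z)$,
\[H(G)\leq H(Z_1,\dots,Z_n)+H(G\mid Z)\leq dn\log m + H(G\mid Z).\]
This gives the first term $dn\log m/\binom n2$. Conditioned on $Z$, the edges are still independent: edge $\{i,j\}$ is Bernoulli with parameter $\bar W_{ab}$, the average of $W$ over $(Q_a\cap\K)\times(Q_b\cap\K)$. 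Hence
\[\frac{H(G\mid Z)}{\binom n2}=\sum_{a,b}\mu_a\mu_b\,h(\bar W_{ab}),\qquad \mu_a=|Q_a\cap\K|.\]
It then remains to bound
\[\sum_{a,b}\mu_a\mu_b\,h(\bar W_{ab})-\iint h(W).\]
This difference is nonnegative by concavity, and I would bound it cell by cell. The cells $Q_a\times Q_b$ (cubes of side $\ell/m$ in $\rr^{2d}$, with diameter $\ell\sqrt{2d}/m$) fall into three classes.

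\emph{Good cells}: those lying inside $\K^2$ and inside a single $A_t$. There $W=f_t$ varies by at most $\epsilon:=L(\ell\sqrt{2d}/m)^\alpha$, which is at most $1$ by the choice of $C$. I will use the modulus of continuity of $h$, namely $|h(x)-h(y)|\leq -\delta\log\delta+\delta/\ln 2$ for $|x-y|\leq\delta\leq 1/2$ (or the analogous form with the $1/\ln2$ term). This gives
\[h(\bar W_{ab})-h(W(x,y))\leq -\epsilon\log\epsilon+\epsilon/\ln 2.\]
Expanding $-\epsilon\log\epsilon=\alpha L(\ell\sqrt{2d})^\alpha\log m/m^\alpha - \epsilon\log(L(\ell\sqrt{2d})^\alpha)$ and summing over at most $m^{2d}$ cells of volume $(\ell/m)^{2d}$ gives total weight at most $\ell^{2d}$. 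This yields the $a_1,a_2$ terms with the factor $\ell^{2d}$.

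\emph{Cells meeting $\partial\K\times\K$ or $\K\times\partial\K$}: these are handled exactly as in \cref{baker_prop}. The number of subcubes meeting $\partial\K$ is controlled by $|\partial\K|$, the per-cell entropy gap is at most $1$, and this gives $2|\partial\K|\ell/m+|\partial\K|^2\ell^2/m^2$. I will reuse that estimate verbatim.

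\emph{Cells inside $\K^2$ that meet a boundary between different $A_t$}: this is the new ingredient and the main obstacle. On such a cell the values of $W$ lie in an interval of length at most $\rho$. So by the same continuity estimate the gap there is at most $\theta=-\rho\log\rho+\rho/\ln2$ (if $\rho>1/2$ the bound must be adjusted, but $\theta$ still dominates the gap up to the monotone-modulus argument). I then need to count the cells of the grid on $\B^2\subset\rr^{2d}$ that meet the boundary of some relative polytope $R_t$. The boundaries between the $T$ pieces are covered by at most $T-1$ hyperplane families (one per interface). The count I aim for is that a hyperplane in $\rr^{2d}$ meets at most about $2d\cdot m^{2d-1}$ grid cubes. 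Project along the coordinate axis most normal to the hyperplane: each column of $m$ cubes in that direction meets the hyperplane in at most a bounded number of cubes, and there are $m^{2d-1}$ columns. Accounting for both sides of each interface gives $4d(T-1)m^{2d-1}$ bad cells. Each has volume $(\ell/m)^{2d}$, so the contribution is $4d(T-1)\ell^{2d}\theta/m$, which is the extra piece of $a_3$. Making this count rigorous, in particular that the total facet structure is captured by $T-1$ hyperplanes per direction with constant $2d$, is where I expect to spend the effort. If it fails, I would first try bounding facet counts per polytope.

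For $\Omega=[0,1]$, take $\B=\K$, so $\partial\K$ contributes nothing. The $|\partial\K|$ terms then disappear, giving $a_4=0$, and $a_3=4d(T-1)\ell^2\theta$ with $d=1$.
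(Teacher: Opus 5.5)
\textbf{Verdict: genuine gap.} Your architecture matches the paper's: discretize, pay $dn\log m$ for the labels, compare $h(\ol W)$ with $h(W)$ cell by cell, and split product cells into H\"older cells, $\partial\K$ cells and interface cells. The H\"older and $\partial\K$ parts go through as you describe. The gap is the interface count, which is the only new step in the proposition, and you leave it open.

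Your count assumes the interfaces are carried by $T-1$ hyperplanes, each meeting about $2d\,m^{2d-1}$ cells, and then doubles ``for both sides.'' Nothing in the hypotheses gives this, because the number of hyperplanes carrying interfaces is not bounded by $T-1$. Already a generic convex subdivision of $[0,1]^2$ into $T$ cells, such as a Voronoi diagram, has about $3T$ interfaces on distinct lines. In higher dimensions, pairwise-adjacent convex pieces allow up to $\binom T2$. So a hyperplane-by-hyperplane count gives a constant proportional to the number of interfaces or facets. Your fallback of counting facets per polytope would prove a bound whose $a_3$ depends on facet numbers, which is weaker than the statement. The doubling is also unfounded, since a cell meeting a hyperplane is counted once.

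The missing idea is to count per convex piece rather than per facet, and then take a union bound over $T-1$ pieces. Say a cell \emph{straddles} $R_t$ if it meets $R_t$ without being contained in it.
\begin{itemize}
\item Because $R_t\subset\B^2\subset\rr^{2d}$ is convex, at most $2\cdot 2d\cdot m^{2d-1}=4d\,m^{2d-1}$ cells straddle it, however many facets it has. The paper gets this from Stefani's monotonicity of surface area for convex bodies, $S(\partial\ol{R_t})\le S(\partial\B^2)=4d\ell^{2d-1}$.
\item A self-contained proof: slice into slabs of thickness $\ell/m$ and induct on dimension. The straddling cells in a slab are those whose base straddles the convex projection of the slab section, plus at most two per vertical column in total. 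If three slabs $j_1<j_2<j_3$ qualified, convexity of each fibre would force the whole cell over slab $j_2$ into $R_t$. This gives $2s\,m^{s-1}$ in $\rr^s$.
\item A cell of $\K^2$ not contained in a single $A_t$ meets two pieces. Hence it straddles $R_t$ for some $t$ in any fixed set of $T-1$ indices.
\end{itemize}
The union bound then gives $|C_2|\le 4d(T-1)m^{2d-1}$, which is where both $T-1$ and $4d$ come from.

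Two smaller points.
\begin{itemize}
\item Given $Z$ alone, the edges are not independent: edges sharing an endpoint still share the random $X_i$ inside its cell. Your displayed identity should be the inequality $H(G\mid Z)\le\sum_{i<j}H(E_{ij}\mid Z_i,Z_j)$, which is all you need.
\item No adjustment is needed when $\rho>1/2$. For all $x,y\in[0,1]$ one has $|h(x)-h(y)|\le h(|x-y|)$; this follows from subadditivity of the concave $h$ with $h(0)=0$, together with $h(t)=h(1-t)$. Also $p\mapsto -p\log p+p/\ln 2$ is nondecreasing on $[0,1]$.
\end{itemize}
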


The proof is a modification of the proof of \cref{baker_prop}. The core idea is to condition on a discretization of the latent space rather than the exact positions associated to the nodes. The entropy associated to the discretization can be bounded in a standard manner, and the conditional entropy of $G(n,W)$ is bounded by a term based on a block-constant approximation of $W$. The difference $\Delta_n$ is then bounded by considering the variation of $W$ over each region determined by the resolution of the discretization. If $W$ is H\"{o}lder continuous on the region, this variation is bounded in terms of the size of the region. Otherwise, the variation is bounded by $\rho$. As the resolution of the discretization increases, the fraction of regions over which $W$ is not H\"{o}lder continuous vanishes, and the discontinuity does not introduce a factor which dominates the leading order term from \cref{baker_prop}. For the full proof, see Appendix \ref{app1}.

As with the result from \cite{bakerpaper}, taking $\gamma=1/\alpha$ in \cref{prop1} immediately gives that $\Delta_n$ converges to 0 at a rate that is $O(\frac{\log n}{n})$. 

\subsection{Bounds on the Entropy of the Stochastic Block Model}
The result in \cref{prop1} gives us not only a rate of convergence, but also an upper bound on the entropy of a $W$-random graph with a given number of nodes. It is then of interest to ask how this bound compares with the results stated in \cref{entropyback}.  From \cref{abbe}, we have that the entropy of a stochastic block model graph $G\sim\mathrm{SBM}(n,v,\Phi)$ is given by
\[H(G) = \binom{n}{2}v^\T H(\Phi) v+O(n).\]
Defining a graphon $\phi$ as in \cref{sbm_graphon}, the following remark is elementary to verify.
\begin{remark}
Given a model $\mathrm{SBM}(n,v,\Phi)$ and equivalent graphon $\phi$, we have
\[v^\T H(\Phi)v = \iint_{[0,1]^2}h(\phi(x,y))\dd x\dd y.\]
\end{remark}

We can then rewrite the result from \cref{abbe} to see
\[H(G) = \binom{n}{2}\iint_{[0,1]^2}h(\phi(x,y))\dd x\dd y+O(n),\]
which together with \cref{jan_prop} improves on the rate of convergence from \cref{prop1} in this special case, as it implies
\[\frac{1}{\binom n2}H(G(n,W)) -\iint_{[0,1]^2}h(\phi(x,y))\dd x\dd y = O\Big(\frac{1}{n}\Big).\]

Notably, this has a lower leading order than the bound from \cref{prop1}, but gives only an asymptotic characterization. In the notation from \cref{abbe}, our result applied to the $\mathrm{SBM}(n,v,\Phi)$ case yields
\begin{align*}\binom{n}{2}v^\T H(\Phi)v\leq H(G)\leq\binom{n}{2}v^\T H(\Phi)v&+n\log(n)+\frac{1}{\sqrt{2}}(n-1)\log(n)\\&+\Big[\frac{4(k^2-1)+\sqrt{2}}{\ln 2}-\frac{1}{\sqrt{2}}\Big](n-1)\end{align*}
after simplifying constants.

\subsection{Bounds on the Entropy of the Random Geometric Graph Model}

The hard random geometric graph model $\mathrm{RGG}(n,1,\tau)$ corresponds to a graphon that is 0-1 valued, so the integral term in \cref{prop1} evaluates to 
\[\iint_{\Omega^2}h\big(W(x,y)\big)\dd x\dd y=0,\] 
since $h(0)=h(1)=0$. Applying \cref{prop1} to $G\sim\mathrm{RGG}(n,1,\tau)$ yields
\[H(G)\leq n\log(n)+\frac{\sqrt{2}}{2}(n-1)\log(n)+\frac{1}{2}\Big(\frac{\sqrt{2}}{\ln 2}-\log(\sqrt{2})\Big)(n-1)\]
irrespective of $\tau$. This gives a bound with the same leading order as the asymptotic statement from \cref{rgg}.

\section{Concluding Discussion}\label{sec4}

In this paper we prove a result on the rate of convergence of normalized $W$-random graph entropy under the assumption of piecewise H\"{o}lder continuity of the graphon. We also derive bounds on the entropy for two standard random graph models where only asymptotic characterizations had been given prior. These bounds may have a higher order of growth in terms of the number of nodes in the graph than previous results, but give an explicit formula for estimating the entropy. It is of note that the previous result of Abbe \cite{abbe2016} implies a better rate of convergence than our result gives, which suggests that our convergence rate may not be the best possible. Future research may give a stronger convergence result than ours, though we suspect a different technique of proof would be necessary. It is also of interest to characterize the problem of lossy compression in the more general graphon setting. We note that the results on lossy compression from Baker et al. extend immediately to the framework we consider in this paper. A more practical direction of interest would be the modeling of real-world networks as graphon models of the type we consider in this paper.

%


\appendix
\section{Proof of \cref{prop1}}\label{app1}

The proof of \cref{prop1} follows the same approach as that of Theorem 3 in \cite{bakerpaper}. The primary idea of the argument is to condition on a discretization of the latent space, and bound the contribution to the difference $\Delta_n$ from each element in the discretization. The H\"{o}lder assumption makes it convenient to bound this contribution for elements across which the graphon is continuous. There is only a trivial bound for elements which the graphon is not continuous over, but the polytope domain assumption ensures that the fraction of these elements will tend to zero as the discretization is made finer.

Consider covering $\K$ with $m^d$ disjoint $d$-cubes of side length $\ell/m$, which we will refer to as cells. Note that this also discretizes $\K^2$ into $m^{2d}$ disjoint product cells. There are two possibilities: either $\K$ can be neatly partitioned as a union of some collection of the cells, or it cannot. Consider the first case, in which no cells overlap the boundary of $\K$. Number the cells in some way, and define a random variable $M_i$ by $M_i = j$ if $X_i$ is in cell $c_j$. Define the vector $M = (M_1,\dots,M_n)$. Bounding $H(G(n,W))$ by the joint entropy, we have
\begin{equation}
H(G(n,W))\leq H(G(n,W)|M)+H(M).\end{equation}
Notice that $M$ has an alphabet of size $m^{dn}$ and so
\begin{equation}H(M)\leq dn\log(m).\end{equation}
We can bound the other term as
\begin{equation}\label{upbnd}
H(G(n,W)|M)\leq\sum_{j=1}^n\sum_{i=1}^{j-1} H(E_{ij}|M_i,M_j).
\end{equation}
To simplify this, we define $\ol{W}:\K^2\to [0,1]$ by
\begin{equation}
\ol{W}(x,y) = \E[W(X_1,X_2)|M_1 = i, M_2=j],
\end{equation}
for $x\in c_i, y\in c_j$. 

\begin{lemma}\label{olw}
\[H(E_{ij}|M_i,M_j) = \iint_{\K^2}h(\ol{W}(x,y))\dd x\dd y.\]
\end{lemma}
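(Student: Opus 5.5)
The plan is to compute $H(E_{ij}\mid M_i,M_j)$ straight from the definition of conditional entropy, averaging over the pair of cell labels. Conditionally on $M_i=a$ and $M_j=b$, the edge indicator $E_{ij}$ is Bernoulli with parameter
\[
P(E_{ij}=1\mid M_i=a,M_j=b)=\E\big[W(X_i,X_j)\,\big|\,M_i=a,M_j=b\big].
\]
This follows by the tower property: first condition on the exact positions $(X_i,X_j)$, under which $E_{ij}\sim\mathrm{Bernoulli}(W(X_i,X_j))$, and then average over positions inside the two cells. Since $(X_i,X_j)$ has the same joint law as $(X_1,X_2)$, this parameter equals $\ol{W}(x,y)$ for any $x\in c_a$, $y\in c_b$. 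The conditional entropy of $E_{ij}$ given $(M_i,M_j)=(a,b)$ is therefore $h(\ol{W}(x,y))$.

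Next I average over $(a,b)$ with weights $P(M_i=a,M_j=b)=\mu(c_a)\mu(c_b)$, using that the $X_i$ are independent. In the case under consideration the cells partition $\K$, and $\mu$ is Lebesgue measure restricted to $\K$, which has unit volume. Hence $\mu(c_a)=|c_a|$ is the Lebesgue volume of the cell. This gives
\[
H(E_{ij}\mid M_i,M_j)=\sum_{a,b}|c_a|\,|c_b|\;h\big(\ol{W}|_{c_a\times c_b}\big).
\]
Because $\ol{W}$ is constant on each product cell $c_a\times c_b$, and these product cells partition $\K^2$, the sum is exactly $\iint_{\K^2}h(\ol{W}(x,y))\dd x\dd y$.

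There is essentially no obstacle here. The only point needing care is the first step: one must check that the Bernoulli parameter conditional on the cells is really the cell-average of $W$. This uses that, given $M_i=a$ and $M_j=b$, the positions $X_i,X_j$ are independent and uniform on $c_a$ and $c_b$, together with conditional independence of the edge given the positions. Cells of measure zero can be ignored, since they contribute nothing to either side.
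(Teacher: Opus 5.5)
Your proof is correct and follows the same route as the paper's. You identify the conditional law of $E_{ij}$ given $(M_i,M_j)=(a,b)$ as Bernoulli with parameter equal to the cell average $\ol{W}$, then average $h(\ol{W})$ over cell pairs weighted by $P(M_i=a,M_j=b)$ and recognize the sum as $\iint_{\K^2}h(\ol{W}(x,y))\dd x\dd y$; your tower-property justification of the first step and your weights $\mu(c_a)\mu(c_b)$ (which give weight zero to cells outside $\K$, instead of assigning every cell pair the weight $(\ell/m)^{2d}$) are somewhat more careful than the paper's write-up.
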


\begin{proof}
It is clear from the definition that
\begin{equation}P(E_{ij}=1|M_i=k, M_j=l) = \ol{W}(x_1,x_2),\end{equation}
for any $x_1\in c_k, x_2\in c_l$. Hence for any $x_1\in c_k, x_2\in c_l$
\begin{equation}
H(E_{ij}|M_i = k, M_j = l) = h(\ol{W}(x_1,x_2)),
\end{equation}
and so
\[
H(E_{ij}|M_i=k,M_j=l) = \Big(\frac{m}{\ell}\Big)^{2d}\int_{c_k}\int_{c_l}h(\ol{W}(x,y))\dd x\dd y.
\]
As the $X_i$ are i.i.d. uniformly on $\K$, $P(M_i = k, M_j=l) = \Big(\frac{\ell}{m}\Big)^{2d}$, and so
\begin{align}
H(E_{ij}|M_i,M_j) &=\sum_{k=1}^{m^d}\sum_{l=1}^{m^d}\int_{c_k}\int_{c_l}h(\ol{W}(x,y))\dd x\dd y\\
&= \iint_{\K^2}h(\ol{W}(x,y))\dd x\dd y.
\end{align}
\end{proof}

By \cref{olw} and \cref{upbnd} we then have
\begin{equation}
H(G(n,W)|M)\leq\binom{n}{2}\iint_{\K^2}h(\ol{W}(x,y))\dd x\dd y,
\end{equation}
which yields
\begin{equation*}\binom{n}{2}\iint\limits_{\K^2}h(W(x,y))\dd x\dd y\leq H(G(n,W))\leq dn\log(m)+\binom{n}{2}\iint\limits_{\K^2}h(\ol{W}(x,y))\dd x\dd y,\end{equation*}
when combined with \cref{lowbnd}.

Rearranging, we see we can bound the difference $\Delta_n$ by bounding
\begin{equation}\label{int}
\iint_{\K^2} h(\ol{W}(x,y))-h(W(x,y))\dd x\dd y.
\end{equation}
By Lemma 3 from \cite{bakerpaper}, it is sufficient to bound
\begin{equation}\iint_{\K^2}h(|\ol{W}(x,y)-W(x,y)|)\dd x\dd y.\end{equation}
Let $C_1$ be the collection of product cells $B\subset\K^2$ such that $B$ is contained in one of the $A_t$, and let $C_2$ be the collection of product cells which are not. Note that $|C_1\cup C_2|=|C_1|+|C_2| \leq m^{2d}$. We would like to have a more precise estimate of the size of these two sets, for which we have the following lemmata.
\begin{lemma}[Stefani \cite{stefani2018}]
For $d\geq 2$, let $A,B\subset\rr^d$ be convex bodies (compact, convex sets with non-empty interior). If $A\subset B$, then
\begin{equation}S(\partial A)\leq S(\partial B)\end{equation}
where $S(\cdot)$ is the $(d-1)$-dimensional Hausdorff measure and $\partial X$ is the boundary of $X$. In other words, the surface area of convex bodies is monotone with respect to inclusion.
\end{lemma}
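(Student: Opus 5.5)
The plan is to prove the monotonicity with the \emph{nearest-point (metric) projection} onto $A$, combined with the fact that Lipschitz maps with constant $1$ do not increase Hausdorff measure. Let $\pi_A:\rr^d\to A$ send each point $z$ to the unique point of $A$ closest to $z$. Existence and uniqueness follow from compactness and strict convexity of the Euclidean norm on the convex set $A$. The proof has three steps.

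First, I will show that $\pi_A$ is $1$-Lipschitz. The variational characterization of the projection is
\[\langle z-\pi_A(z),\,a-\pi_A(z)\rangle\le 0\quad\text{for all } a\in A.\]
Apply this at $z$ and at $w$, using $a=\pi_A(w)$ and $a=\pi_A(z)$ respectively, and add the two inequalities. This gives $\|\pi_A(z)-\pi_A(w)\|^2\le\langle z-w,\pi_A(z)-\pi_A(w)\rangle$, and Cauchy--Schwarz then yields $\|\pi_A(z)-\pi_A(w)\|\le\|z-w\|$. Consequently, for any set $E$ we have $S(\pi_A(E))\le S(E)$, since images of covers by sets of diameter $\delta$ are covers by sets of diameter at most $\delta$.

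Second, the key geometric step, which I expect to be the main point requiring care: showing $\partial A\subset\pi_A(\partial B)$. Fix $a\in\partial A$. Since $A$ is convex with non-empty interior, there is a supporting hyperplane at $a$ with outer unit normal $u$, so that $\langle x-a,u\rangle\le 0$ for all $x\in A$. Consider the ray $r(t)=a+tu$ for $t\ge 0$. By the normal-cone characterization above, $\pi_A(r(t))=a$ for every $t\ge0$.
\begin{itemize}
\item The ray starts at $r(0)=a\in A\subset B$.
\item Because $B$ is bounded, the ray eventually leaves $B$.
\item Let $t^*=\sup\set{t\ge0: r(t)\in B}$. Since $B$ is closed, $r(t^*)\in B$, and points $r(t)$ with $t>t^*$ lie outside $B$. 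Hence $b=r(t^*)\in\partial B$; this includes the case $t^*=0$, when $a$ itself lies on $\partial B$.
\end{itemize}
Then $\pi_A(b)=a$, which establishes the inclusion.

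Third, I will combine the two steps:
\[S(\partial A)\le S\big(\pi_A(\partial B)\big)\le S(\partial B).\]
The main obstacle is only the surjectivity argument in the second step, namely verifying that the outward normal ray indeed projects back to $a$. That is exactly the obtuse-angle characterization, so once it is in hand the result is immediate. I note that the hypothesis $d\ge2$ only ensures that $(d-1)$-dimensional measure is the natural notion of surface area; the argument itself is dimension-free.
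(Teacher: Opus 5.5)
Your proof is correct. The paper gives no proof of this lemma; it imports the statement from Stefani, whose note works with (anisotropic) perimeters of convex bodies and whose main contribution is a quantitative lower bound on $P(B)-P(A)$ in terms of the Hausdorff distance. Your argument is instead the classical self-contained one:
\begin{itemize}
\item the metric projection $\pi_A$ is $1$-Lipschitz;
\item every $a\in\partial A$ is the image under $\pi_A$ of the point where the outer normal ray at $a$ exits $B$, so $\partial A\subset\pi_A(\partial B)$;
\item hence $S(\partial A)\le S(\pi_A(\partial B))\le S(\partial B)$.
\end{itemize}
The step $\pi_A(a+tu)=a$ uses the sufficiency half of the variational inequality. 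That half follows at once from expanding $\|z-x\|^2=\|z-p\|^2+\|p-x\|^2-2\langle z-p,x-p\rangle$, and you call the inequality a characterization, so this is fine.

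Your route buys two things. First, it works directly with the $(d-1)$-dimensional Hausdorff measure of the topological boundary, exactly as the lemma is stated. It does not go through the perimeter of sets of finite perimeter and its identification with $S(\partial\,\cdot)$ for convex sets. Second, it is strictly more general. You never use convexity of $B$, nor that either set has non-empty interior: it suffices that $A$ is closed and convex and $B$ is any bounded set containing $A$. For the latter, replace $B$ by $\overline{B}$ and use $\partial\overline{B}\subset\partial B$. What the cited source offers beyond this is anisotropic generality and a stability estimate, and the paper uses neither.
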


\begin{remark}
The above fact also applies to the relative polytopes we have defined above, since if $A$ is a relative polytope its boundary $\partial A$ is the boundary of a convex body, namely the convex polytope $\ol{A}$.
\end{remark}

\begin{lemma}
Let $X\subset\rr^s$ be a $s$-cube of side length $\ell$. Divide $X$ into $m^s$ disjoint $s$-cubes of side length $\ell/m$, which we refer to as cells. If $A\subset X$ is a relative polytope, let $N$ be the number of cells which intersect the boundary of $A$. Then 
\begin{equation}\label{shapelem}
 N\leq 2s m^{s-1}
\end{equation}
\end{lemma}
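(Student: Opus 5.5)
The plan is to prove \eqref{shapelem} one coordinate direction at a time. The convexity of $\ol{A}$, together with the fact that $\partial A=\partial\ol{A}$ (the remark after Stefani's lemma), should force each line in a fixed direction to meet $\partial A$ in a very restricted way. Fix a cell $c$ that meets $\partial A$ and a point $p\in\partial A\cap c$. Let $u$ be an outward unit normal of a supporting hyperplane of $\ol{A}$ at $p$; one exists since $\ol{A}$ is a convex body. Choose a coordinate $i$ with $|u_i|=\max_j|u_j|$ and a sign $\sigma=\operatorname{sgn}(u_i)$, so that $u_i\geq 1/\sqrt{s}$ in absolute value. Then for $t>0$,
\[
u\cdot(p+t\sigma e_i)=u\cdot p+t|u_i|>u\cdot p,
\]
so $p$ is the $\sigma$-extreme point of $\ol{A}$ on the line $p+\rr e_i$. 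This assigns to every boundary cell a label $(i,\sigma)$, with $2s$ labels in total. The target is therefore to show that, for each fixed label, at most $m^{s-1}$ cells receive it.

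Fix a label $(i,\sigma)$ and group the cells into the $m^{s-1}$ columns parallel to $e_i$, indexed by cells $Q$ of the coordinate hyperplane $e_i^\perp$. The points of $\partial A$ carrying label $(i,\sigma)$ form part of the graph of a concave (for $\sigma=+$) or convex (for $\sigma=-$) function $g$ on the projection $\pi_i(\ol{A})\subset e_i^\perp$. Because $|u_j|\leq|u_i|$ for every $j$, each partial derivative of $g$ has modulus at most $1$ wherever it exists. Within the column over $Q$, the labelled cells are exactly the cells met by the graph of $g|_Q$. Their number is therefore at most $1+\operatorname{osc}_Q g/(\ell/m)$, so the count for this label is
\[
\#\{\text{labelled cells}\}\leq m^{s-1}+\frac{m}{\ell}\sum_Q \operatorname{osc}_Q g .
\]

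The step I expect to be the main obstacle is absorbing the oscillation term to reach the exact constant $2s$. A crude Lipschitz estimate only gives $\operatorname{osc}_Q g\leq (s-1)\ell/m$, which yields $2s\cdot s\,m^{s-1}$ rather than $2s\,m^{s-1}$. To do better I would first use concavity: along each grid line in $e_i^\perp$, $g$ is unimodal, so the variation of $g$ telescopes to at most $2\ell$ on that line. I would then try to charge the extra cells in a column to a neighbouring column in a direction $j\neq i$, where the same boundary cell is instead the $\pm e_j$-extreme cell. In other words, I would refine the labelling so that each cell is counted for exactly one label that makes it extremal in its own column. Under such a labelling, each column carries at most one cell per label, which gives exactly $2s$ per column position and hence $N\leq 2s\,m^{s-1}$.

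As a fallback, I would compare with surface area. By Stefani's lemma, $S(\partial A)\leq S(\partial X)=2s\ell^{s-1}$, and the labelled piece for $(i,\sigma)$ projects onto $e_i^\perp$ with Jacobian at least $1/\sqrt{s}$. This bounds the projected area. Counting cells met through the projected area, plus a boundary layer, would recover a bound of the form $C_s m^{s-1}$. The resulting constant would then have to be carried through the definition of $a_3$ in \cref{prop1}.
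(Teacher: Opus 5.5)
Your column argument does not reach the constant $2s$, and the step you propose for closing the gap is false. With the Lipschitz bound the count is only $2s\cdot s\,m^{s-1}$, as you note. The per-column estimate itself should read $\le 2+\operatorname{osc}_Q g/(\ell/m)$, since a graph of tiny oscillation straddling a grid level already meets two cells of a column.

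Your repair is to relabel so that every boundary cell is the extremal cell of its own column for its label, giving at most one cell per column per label. Such a labelling does not exist in general. Take $s=2$, $\delta=\ell/m$, a cell $c=[a,a+\delta)\times[b,b+\delta)$ not adjacent to $\partial X$, and the diamond $A=\set{(x,y):\abs{x-a-\delta/2}+\abs{y-b-\delta/2}\le\delta}$. The closed cell lies in $A$ and touches $\partial A$ only at its four corners, so the half-open cell $c$ meets $\partial A$ only at $(a,b)$. Yet $A$ meets the cells directly left of, right of, above and below $c$, so $c$ is extremal in none of the four directions. Assigning such cells to other columns is exactly the counting that has to be done, and the proposal gives no mechanism for it.

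The unimodality remark does not help. It bounds the variation of $g$ along each grid line by $2\ell$. Summed over the $m^{s-2}$ lines in each of the $s-1$ directions, this still leaves an oscillation term of order $(s-1)m^{s-1}$ per label.

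Your fallback gives only $C_s m^{s-1}$ for an unspecified $C_s$, so it does not prove the lemma as stated. For comparison, the paper's proof is this surface-area route in its crudest form. It uses Stefani's monotonicity to get $S(\partial A)\le 2s\ell^{s-1}$ and then divides by the face area $(\ell/m)^{s-1}$. Your doubt about that conversion is justified: surface area does not bound the number of cells met. A cube of side $2\epsilon$ centred at an interior grid vertex has $S(\partial A)\to 0$, yet its boundary meets $2^s$ cells. A large piece of hyperplane with unit normal $u$ meets roughly $\|u\|_1$ times as many cells as its area divided by $(\ell/m)^{s-1}$. So the paper does not supply the missing step either.

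The quantity that actually governs the count is $\int_{\partial A}\|u\|_1\dd S=2\sum_i\mathrm{vol}_{s-1}(\pi_i\ol{A})\le 2s\ell^{s-1}$, by Cauchy's projection formula. This explains why $2s$ is the natural leading constant. It still does not give the exact inequality without a genuine combinatorial count. For $s=2$ such a count exists: split $\partial A$ into four coordinate-monotone arcs, each meeting at most $1+\Delta(\text{column})+\Delta(\text{row})$ cells, which gives $N\le 4m$.
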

\begin{proof}
By the fact from Stefani, since $A\subset X$ the surface area of $A$ is bounded above as
\begin{equation}S(\partial A)\leq 2s\ell^{s-1}.\end{equation}
The area of a face of a cell is $(\ell/m)^{s-1}$, so the number of cells required to cover the boundary of $A$ is at most $2s m^{s-1}$.
\end{proof}
By \cref{shapelem} we have that the number of cells intersecting the boundary of one set $A_t$ is at most $4d m^{2d-1}$. Each cell in $C_2$ is shared by at least 2 of the $T$ sets $A_t$, so after totalling the contributions from $T-1$ of the sets all have been accounted for. Then
\begin{equation}\label{c2bnd}
|C_2|\leq 4d(T-1)m^{2d-1}.
\end{equation}

Then
\begin{align*}\iint_{\K^2}h(|\ol{W}(x,y)-W(x,y)|)\dd x\dd y = &\sum_{B_1\in C_1}\iint_{B_1}h(|\ol{W}(x,y)-W(x,y)|)\dd x\dd y\\ &+\sum_{B_2\in C_2}\iint_{B_2}h(|\ol{W}(x,y)-W(x,y)|)\dd x\dd y.\end{align*}
For $B_1\in C_1$, since $W$ is H\"{o}lder continuous over $B_1$ we have
\begin{equation}|\ol{W}(x,y) - W(x,y)|\leq\sup_{(x,y),(x',y')\in B_1}\abs{W(x,y)-W(x',y')}\leq L\Big(\frac{\ell\sqrt{2d}}{m}\Big)^\alpha\end{equation}
where 
\begin{equation}L= \max_{t\in\set{1,\dots,T}}L_t,\quad \alpha = \min_{t\in\set{1,\dots,T}}\alpha_t.\end{equation}
This follows as the greatest distance between two points in $B_1$ is along a diagonal. To bound the integrand, we utilize the following fact.
\begin{lemma}[Baker \cite{bakerpaper}]\label{ent-exp}
\begin{equation}\label{series}h(x) = -x\log x + \frac{x}{\ln 2}-\frac{1}{\ln 2}\sum_{k=2}^\infty \frac{x^k}{k(k-1)}.\end{equation}
\end{lemma}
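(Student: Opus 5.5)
We will prove the identity by expanding the logarithm of $1-x$ in a power series. Since the identity involves $\log(1-x)$, we assume $0\le x<1$; the endpoint $x=1$ follows by continuity, or is not needed in the application, because $x$ will be a small H\"older increment. Start from the definition
\[h(x)=-x\log x-(1-x)\log(1-x).\]
The first term already appears on the right-hand side of \cref{series}, so it suffices to show
\[-(1-x)\log(1-x)=\frac{x}{\ln 2}-\frac{1}{\ln 2}\sum_{k=2}^\infty\frac{x^k}{k(k-1)}.\]

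Convert to natural logarithms using $\log(1-x)=\ln(1-x)/\ln 2$. For $|x|<1$, the Taylor series $-\ln(1-x)=\sum_{k\ge1}x^k/k$ gives
\[-(1-x)\ln(1-x)=\sum_{k\ge1}\frac{x^k}{k}-\sum_{k\ge1}\frac{x^{k+1}}{k}.\]
Both series converge absolutely, so the second can be reindexed as $\sum_{k\ge2}x^k/(k-1)$. Collecting the $k=1$ term separately, this becomes
\[x+\sum_{k\ge2}x^k\Big(\frac1k-\frac1{k-1}\Big)=x-\sum_{k\ge2}\frac{x^k}{k(k-1)}.\]
Dividing by $\ln 2$ gives exactly the claimed expression.

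For the endpoint $x=1$, note that $\sum_{k\ge2}1/(k(k-1))=1$ telescopes. The right-hand side is then $-\log 1+1/\ln2-1/\ln2=0=h(1)$, so the identity still holds. Alternatively, since the series has nonnegative terms and converges at $x=1$, Abel's theorem gives continuity of the right-hand side on $[0,1]$, and the identity extends from $[0,1)$.

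The only delicate step is justifying the rearrangement, namely reindexing and subtracting two series. Absolute convergence for $0\le x<1$ handles this. The boundary case $x=1$ is the one place needing separate care, which the telescoping check above supplies.
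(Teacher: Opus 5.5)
Your derivation is correct. The paper does not prove this identity itself but cites Baker et al., and your argument is the standard way to obtain it: expand $-(1-x)\ln(1-x)=(1-x)\sum_{k\ge1}x^k/k$, shift the index of the second series, combine coefficients via $\frac1k-\frac1{k-1}=-\frac{1}{k(k-1)}$, and handle $x=1$ with the telescoping check $\sum_{k\ge2}\frac{1}{k(k-1)}=1$.

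One small correction to your aside: the identity is not applied only to small H\"older increments. On the cells of $C_2$ the argument of $h$ is bounded only by $\rho$, which can equal $1$ (e.g.\ for the $0$--$1$ valued hard RGG graphon). So covering all of $[0,1]$, including the endpoint, is needed rather than optional.
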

From \cref{ent-exp}, we have $h(p)\leq -p\log p + p/\ln 2$; this upper bound is a non-decreasing function of $p$ on $[0,1]$. Then, provided $L(\ell\sqrt{2d}/m)^\alpha\leq 1$,
\begin{align*}
h(|\ol{W}(x,y)-W(x,y)|)&\leq -L\Big(\frac{\ell\sqrt{2d}}{m}\Big)^\alpha\log L\Big(\frac{\ell\sqrt{2d}}{m}\Big)^\alpha+\frac{L}{\ln 2}\Big(\frac{
\ell\sqrt{2d}}{m}\Big)^\alpha\\
&\leq\alpha L\Big(\frac{
\ell\sqrt{2d}}{m}\Big)^\alpha\log(m)+L\Big(\frac{\ell\sqrt{2d}}{m}\Big)^\alpha\Big(\frac{1}{\ln 2}-\log(L(\ell\sqrt{2d})^{\alpha})\Big)\\
&= a'_1 \frac{\log m}{m^\alpha}+a'_2 \frac{1}{m^\alpha}
\end{align*}
for constants $a'_1,a'_2$. Hence
\begin{align*}\iint_{B_1}h(|\ol{W}(x,y)-W(x,y)|)\dd x\dd y&\leq\Big(a'_1 \frac{\log m}{m^\alpha}+a'_2 \frac{1}{m^\alpha}\Big)\iint_{B_1}\dd x\dd y\\
&=\frac{\ell^{2d}}{m^{2d}}\Big(a'_1 \frac{\log m}{m^\alpha}+a'_2 \frac{1}{m^\alpha}\Big)
\end{align*}
where the constants $a'_1,a'_2$ are independent of the particular cell in question.
For $B_2\in C_2$, $W$ is not H\"{o}lder continuous and so we have the bound
\begin{equation}|\ol{W}(x,y) - W(x,y)|\leq\sup_{(x,y),(x',y')\in B_2}\abs{W(x,y)-W(x',y')}\leq\rho\end{equation}
where
\begin{equation}\rho = \max_{t,t'\in\set{1,\dots,T}}\sup_{\substack{(x,y)\in A_t\\(x',y')\in A_{t'}}}\abs{f_t(x,y) - f_{t'}(x',y')}.\end{equation}
Each $f_t$ takes values in $[0,1]$, so $\rho\in[0,1]$. Then in the same manner as above
\begin{equation}h(|\ol{W}(x,y)-W(x,y)|)\leq -\rho\log\rho +\frac{\rho}{\ln 2}\equiv \theta.\end{equation}
Then
\begin{equation}\iint_{B_2}h(|\ol{W}(x,y)-W(x,y)|)\dd x\dd y\leq\theta\iint_{B_2}\dd x\dd y = \frac{\ell^{2d}}{m^{2d}}\theta.\end{equation}
Putting everything together, we have
\begin{align*}
0\leq\Delta_n\leq\frac{dn\log(m)}{\binom{n}{2}}+&\sum_{B_1\in C_1}\iint_{B_1}h(|\ol{W}(x,y)-W(x,y)|)\dd x\dd y\\ &+\sum_{B_2\in C_2}\iint_{B_2}h(|\ol{W}(x,y)-W(x,y)|)\dd x\dd y,
\end{align*}
and so
\begin{align*}
0\leq\Delta_n&\leq\frac{dn\log m}{\binom n2} +\frac{|C_1|}{m^{2d}}\ell^{2d}\Big(a'_1 \frac{\log m}{m^\alpha}+a'_2 \frac{1}{m^\alpha}\Big)+\frac{|C_2|}{m^{2d}}\ell^{2d}\theta\\
&\leq\frac{dn\log m}{\binom n2} +a_1 \frac{\log m}{m^\alpha}+a_2 \frac{1}{m^\alpha}+\frac{4d(T-1)}{m}\ell^{2d}\theta
\end{align*}
using \cref{c2bnd} and that $|C_2|/m^{2d}\leq 1$. 
If no collection of the cells partitions $\K$, then $O(m^{d-1})$ cells overlap with the boundary of $\K$. We divide the cells into two collections: $C$ consists of cells completely contained within $\K$, and $\partial C$ consists of cells which intersect the boundary $\partial\K$. Define $\ol{W}$ as above, and we trivially extend $W$ and $\ol{W}$ by
\begin{equation}W(x,y)=\ol{W}(x,y) = 0\end{equation}
for  $(x,y)\notin\K^2$. Following the work above, we need to bound the term in \cref{int}. We can express the integral as
\begin{align*}
\iint_{\K^2}h(|\ol{W}(x,y)-W(x,y)|)\dd x\dd y &=\sum_{\substack{i,j\\ c_i,c_j\in C}}\iint_{c_i\times c_j}h(|\ol{W}(x,y)-W(x,y)|)\dd x\dd y\\
&+\sum_{\substack{i,j\\ c_i,c_j\in\partial C}}\iint_{c_i\times c_j}h(|\ol{W}(x,y)-W(x,y)|)\dd x\dd y\\
&+2\sum_{\substack{i,j\\ c_i\in C, c_j\in\partial C}}\iint_{c_i\times c_j}h(|\ol{W}(x,y)-W(x,y)|)\dd x\dd y
\end{align*}
The previous analysis applies to the first term. In the latter two terms, we cannot use the H\"{o}lder continuity of $W$, so we bound the binary entropy function trivially by 1. By the argument from Baker et al. \cite{bakerpaper}, we have
\begin{align*}
\sum_{\substack{i,j\\ c_i,c_j\in\partial C}}\iint_{c_i\times c_j}h(|\ol{W}(x,y)-W(x,y)|)\dd x\dd y&\leq\sum_{\substack{i,j\\ c_i,c_j\in\partial C}}\iint_{c_i\times c_j}\dd x\dd y\\
&\leq |\partial C|^2\Big(\frac{\ell}{m}\Big)^2\\
&\leq |\partial\K|\Big(\frac{\ell}{m}\Big)^2,
\end{align*}
and
\begin{equation}
2\sum_{\substack{i,j\\ c_i\in C, c_j\in\partial C}}\iint_{c_i\times c_j}h(|\ol{W}(x,y)-W(x,y)|)\dd x\dd y\leq 2|\partial\K|\frac{\ell}{m}.
\end{equation}
Letting $a_4 = |\partial\K|^2\ell^2$, we have
\[
0\leq\Delta_n\leq\frac{dn\log m}{\binom n2} +a_1 \frac{\log m}{m^\alpha}+a_2 \frac{1}{m^\alpha}+a_3\frac{1}{m}+a_4\frac{1}{m^2}
\]
where $a_3 = 4d(T-1)\ell^{2d}\theta + 2|\partial\K|\ell$.

\end{document}